\documentclass[a4paper, reqno]{amsart}
\usepackage[margin=2.5cm]{geometry}
\usepackage{amsmath,amssymb,amsthm, enumerate, mathtools}
\usepackage[dvipsnames]{xcolor}
\usepackage[all,cmtip]{xy}
\usepackage[alphabetic,nobysame, initials]{amsrefs}
\usepackage{tikz}
\usepackage{graphicx}
		\graphicspath{{Pictures/} }
\usepackage{color} 
\definecolor{darkblue}{rgb}{0,0,0.6}

\usepackage[breaklinks, pdftex, ocgcolorlinks,colorlinks=true, citecolor=darkblue, filecolor=darkblue, linkcolor=darkblue, urlcolor=darkblue]{hyperref}
\usepackage[capitalize,noabbrev]{cleveref}

\newcommand{\RR}{\mathbb{R}}

\newcommand{\Z}{\mathbb{Z}}

\newcommand{\CP}{\mathbb{CP}}

\newcommand{\ol}{\overline}
\newcommand{\wt}{\widetilde}

\newcommand{\sm}{\smallsetminus}
\renewcommand{\epsilon}{\varepsilon}
\DeclareMathOperator{\Arf}{Arf}

\newcommand{\Sum}{\displaystyle \sum}

\renewcommand{\top}{\mathrm{top}}


\newcommand{\varG}{\widetilde{G}_{\CP^2}}

\makeatletter
\newtheorem*{rep@theorem}{\rep@title}
\newcommand{\newreptheorem}[2]{%
\newenvironment{rep#1}[1]{%
 \def\rep@title{#2 \ref{##1}}%
 \begin{rep@theorem}}%
 {\end{rep@theorem}}}
\makeatother

\theoremstyle{plain}
\newtheorem{theorem}{Theorem}[section]
	\newtheorem{proposition}[theorem]{Proposition}
	\newtheorem{lemma}[theorem]{Lemma}
	\newtheorem{corollary}[theorem]{Corollary}

\newreptheorem{theorem}{Theorem}
\newreptheorem{lemma}{Lemma}
\newreptheorem{proposition}{Proposition}
\newreptheorem{corollary}{Corollary}

\theoremstyle{definition}

	\newtheorem{remark}[theorem]{Remark}

\numberwithin{figure}{section}
\numberwithin{equation}{section}

\begin{document}

\title{A note on surfaces in  $\CP^2$
and $\CP ^2\# \CP ^2$}

\author[M.~Marengon]{Marco Marengon}
\address{HUN-REN R\'enyi Institute of Mathematics,    Re\'altanoda utca 13-15, 1053. Budapest, Hungary}
\email{marengon@renyi.hu}

\author[A.\ N.\ Miller]{Allison N.\ Miller}
\address{Department of Mathematics \& Statistics, Swarthmore College, 500 College Avenue, Swarthmore, PA. 19081}
\email{amille11@swarthmore.edu }

\author[A.~Ray]{Arunima Ray}
\address{Max Planck Institut f\"{u}r Mathematik, Vivatsgasse 7, 53111 Bonn, Germany}
\email{aruray@mpim-bonn.mpg.de }

\author[A.\ I.\ Stipsicz]{Andr\'as I.\ Stipsicz}
\address{HUN-REN R\'enyi Institute of Mathematics,    Re\'altanoda utca 13-15, 1053. Budapest, Hungary}
\email{stipsicz.andras@renyi.hu}

\def\subjclassname{\textup{2020} Mathematics Subject Classification}
\expandafter\let\csname subjclassname@1991\endcsname=\subjclassname
\subjclass{
57R40, 
57K10, 
57N70, 
}
\keywords{}

\begin{abstract}
In this brief note, we investigate the $\CP^2$-genus of knots, i.e.\ the least genus of a smooth, compact, orientable surface in $\CP^2\sm \mathring{B^4}$ bounded by a knot in $S^3$. We show that this quantity is unbounded, unlike its topological counterpart. We also investigate the $\CP^2$-genus of torus knots.  We apply these results to improve the minimal genus bound for some homology classes
in $\CP^2\# \CP  ^2$.
\end{abstract}
\maketitle

\section{Introduction}
The genus function $G_X$ of a smooth $4$-manifold $X$, as a function
$H_2(X; \Z )\to \Z_{\geq 0}$, is defined for $\alpha \in H_2(X ; \Z )$ as
\begin{equation}\label{eq:mingenus}
G_X(\alpha )=\min \{ g(F )\mid i\colon F \to X, i_*([F ])=\alpha\},
\end{equation}
where the minimum is taken over all smooth embeddings $i$ of smooth, closed,
oriented surfaces $F$. A triumph of modern gauge theory consists of Kronheimer and Mrowka's
determination of $G_{\CP^2}$, i.e.\ the solution to the minimal genus
problem in $\CP^2$, also called the Thom
conjecture~\cite{thomconjecture}. They showed that if $h\in H_2 (\CP
^2; \Z)\cong \Z $ is a generator and $d\neq 0$ is an integer, then
\[
G_{\CP ^2}(d\cdot h)=\frac{(\vert d\vert -1)(\vert d \vert -2)}{2},
\]
and $G_{\CP ^2}(0)=0$.
In this paper we will consider two generalisations of this: in one direction we will study a relative version of the genus bound, for surfaces with boundary, embedded in a punctured $\CP^2$ (Subsection \ref{sec:relativeCP2}); in another direction, we will examine the classical minimal smooth genus problem in the connected sum of \emph{two} copies of $\CP^2$ (Subsection \ref{sec:absolute2CP2}).

\subsection{Relative minimal genus problem in \texorpdfstring{$\CP^2$}{CP2}}
\label{sec:relativeCP2}
The
\emph{$\CP^2$-genus} of a knot $K\subseteq S^3$, denoted by
$g_{\CP^2}(K)$, is the least genus of a smooth, compact, connected,
orientable, properly embedded surface $F\subseteq \CP^2\sm
\mathring{B^4}=:(\CP^2)^\times$ bounded by $K\subseteq \partial
(\CP^2)^\times\cong S^3$. Similarly, there is the \emph{topological
$\CP^2$-genus}, denoted by $g^\top_{\CP^2}(K)$, the least genus of a
surface $F$ as above which is only locally flat and embedded. A
knot $K$ is said to be slice in $\CP^2$ if $g_{\CP^2}(K)=0$, and
topologically slice in $\CP^2$ if $g_{\CP^2}^\top(K)=0$.

It was shown in~\cite{KPRT}*{Corollary~1.12\,(2)} that $g^\top_{\CP^2}(K)\leq 1$ for every knot $K$. By contrast, we show that $g_{\CP^2}$ is unbounded.

\begin{theorem}\label{thm:top-v-smooth}\label{thm:arb-large}
Let $n\geq 0$. There exists a topologically slice knot $K$ 
with $g_{\CP^2}(K)\geq n$. 
\end{theorem}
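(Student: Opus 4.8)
The plan is to produce topologically slice knots $K$ on which a smooth concordance invariant grows without bound, and to bound $g_{\CP^2}$ from below by that invariant. For the invariant I would use the Ozsv\'ath--Szab\'o $\tau$ (the Rasmussen $s$-invariant would serve equally well), and for the knots, connected sums of mirrors of the untwisted Whitehead double of the trefoil.

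\textbf{Step 1: a lower bound for $g_{\CP^2}$.} I would establish $g_{\CP^2}(K)\ge\phi(\tau(K))$ for some function $\phi$ with $\phi(t)\to+\infty$ as $t\to-\infty$ --- one expects $\phi(t)=-t-c$ for a universal constant $c$. Let $F\subset(\CP^2)^\times$ be a smooth, connected, properly embedded surface of genus $h$ with $\partial F=K$, and write $[F]=k\cdot[\CP^1]\in H_2((\CP^2)^\times,\partial)$. The argument is a relative adjunction inequality: closing $(\CP^2)^\times$ up into $\CP^2$ by a pushed-in Seifert surface for $K$ --- equivalently, attaching a $2$-handle along $K$ --- and feeding $F$ into the Heegaard Floer cobordism maps, one relates $\tau(K)$, the genus $h$, the self-intersection $k^2$, and the pairing of $c_1$ with $[F]$. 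Because $\CP^2$ is \emph{positively} definite, the inequality that comes out controls $-\tau(K)=\tau(\overline K)$ rather than $\tau(K)$ (applied to the sub-ball $B^4\subset(\CP^2)^\times$ it recovers the classical $g_4(K)\ge|\tau(K)|$). For homology classes with $|k|$ large one argues instead with the closed surface obtained by capping $F$ with a Seifert surface, invoking Kronheimer--Mrowka's theorem recalled above, which gives $h\ge\frac{(|k|-1)(|k|-2)}{2}-g_3(K)$; assembling the two estimates yields the bound uniformly in $k$.

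\textbf{Step 2: the knots, and the conclusion.} Let $D=D_+(T_{2,3})$ be the positive-clasped, untwisted Whitehead double of the right-handed trefoil. It has trivial Alexander polynomial, hence is topologically slice by Freedman's theorem, and therefore so is its mirror $\overline D=D_-(T_{2,-3})$; by Hedden's computation $\tau(D)=1$, so $\tau(\overline D)=-1$. Since connected sums of topologically slice knots are topologically slice and $\tau$ is additive, $K_m:=\#^m\overline D$ is topologically slice --- in particular topologically slice in $\CP^2$, so $g^\top_{\CP^2}(K_m)=0$, which is exactly the contrast the theorem records --- with $\tau(K_m)=-m$. Given $n\ge 0$, choose $m$ large enough that $\phi(-m)\ge n$ and put $K:=K_m$; then $K$ is topologically slice and, by Step 1, $g_{\CP^2}(K)\ge\phi(\tau(K))=\phi(-m)\ge n$.

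\textbf{The main obstacle} is Step 1. The delicate points are: getting the \emph{sign} right --- the bound must obstruct knots with very negative $\tau$, not positive $\tau$; a sanity check is that the right-handed trefoil, with $\tau=+1$, bounds a smooth disk in $(\CP^2)^\times$ (delete a neighbourhood of the cusp of a rational cuspidal cubic), so any correct bound is vacuous for it. Next, handling the homology class $k$ of the competing surface uniformly, so that the Floer-theoretic estimate (effective for small $|k|$) and the Kronheimer--Mrowka estimate (effective for large $|k|$) leave no gap --- this is the subtlest bookkeeping, since a surface realising $g_{\CP^2}(K)$ need not have small $[F]$, and one must check the two ranges genuinely overlap. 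Finally, the Heegaard Floer grading-shift bookkeeping for the cobordism obtained by closing up $(\CP^2)^\times$, which is awkward precisely because $(\CP^2)^\times$ has $b_2^+=1$. If a relative adjunction inequality of the required shape is already on record, Step 1 reduces to citing it together with the homology-class case analysis.
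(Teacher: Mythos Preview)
Your Step~2 is essentially the paper's Step~2: connected sums of the negative-clasped untwisted Whitehead double of the (left-handed) trefoil, topologically slice with $\tau=-m$.

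Your Step~1, however, has a genuine gap precisely where you flag the ``subtlest bookkeeping'': the two estimates do \emph{not} overlap. For $K_m=\#^m\overline D$ one has $-\tau(K_m)=m$, and capping a putative genus-$h$ surface in $(\CP^2)^\times$ of class $k$ with a minimal surface in $B^4$ costs at least $g_4(K_m)\ge|\tau(K_m)|=m$. So your two bounds read
\[
h\;\ge\; m-\tfrac{|k|(|k|-1)}{2}
\qquad\text{and}\qquad
h\;\ge\;\tfrac{(|k|-1)(|k|-2)}{2}-m.
\]
Near $|k|\approx 1+\sqrt{2m}$ both right-hand sides are about $-\tfrac12\sqrt{2m}<0$, so neither inequality obstructs $h=0$, let alone forces $h\ge n$. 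This is unavoidable: any topologically slice $K$ with $-\tau(K)=m$ has $g_4(K)\ge m$, so the Kronheimer--Mrowka cap loses exactly as much as the $\tau$-bound gains. No choice of $m$ rescues the argument.

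The paper replaces your large-$|k|$ estimate by the Viro--Gilmer branched-cover signature bounds (\cref{lemma}\,(2),(3)). For a topologically slice knot the Levine--Tristram signatures vanish identically, so these bounds give roughly $2h+1\ge \tfrac{4}{9}k^2-1$ independently of $m$. That is the key point: the obstruction for large $|k|$ must involve an invariant that stays bounded along the family $K_m$, and the signature function does while any genus does not. With this substitution the two ranges genuinely cover all $k$ (\cref{prop:arb-large}).
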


This result answers \cite{aitnouh}*{Question~2.1}, which asked for a
knot which is topologically slice in $\CP^2$ but not smoothly
slice. The largest value of $g_{\CP^2}$ previously in the literature
was $g_{\CP^2}(T_{2,17})=7$ from~\cite{aitnouh}*{Theorem~1.2}. Our
examples can be taken to be connected sums of the untwisted, negative
clasped Whitehead double of the left-handed trefoil knot. Since these
are topologically slice in $B^4$, they have trivial $g_{\CP^2}^\top$.

On the constructive side, we also give a method to find knots with
trivial $g^\top_{\CP^2}$, that are not necessarily topologically slice
in $B^4$. 

\begin{theorem}\label{thm:arf-zero}
Let $K\subseteq S^3$ be a knot. If $\Arf(K)=0$ then $g^\mathrm{top}_{\CP^2}(K)=0$. 
\end{theorem}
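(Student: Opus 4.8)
\emph{Plan.} The aim is to produce a locally flat embedded disk in $(\CP^2)^\times$ with boundary $K$, lying in the generator of $H_2((\CP^2)^\times,\partial(\CP^2)^\times;\Z)\cong\Z$. I would first build a generically immersed such disk out of a pair of complex lines, and then apply Freedman's disk embedding theorem to remove the double points; the hypothesis $\Arf(K)=0$ should be exactly what makes the relevant secondary obstruction vanish.

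\emph{Step 1: a model disk with a dual sphere.} Pick a complex line $L\subseteq\CP^2$ meeting the deleted ball $\mathring{B^4}$ in a standardly embedded $2$-disk, so that $\Delta_0:=L\sm\mathring{B^4}$ is an embedded disk in $(\CP^2)^\times$ bounded by an unknot in $\partial(\CP^2)^\times\cong S^3$ and generating $H_2((\CP^2)^\times,\partial(\CP^2)^\times;\Z)$. Pick a second line $L'$ disjoint from $\mathring{B^4}$ and transverse to $L$; then $L'\subseteq(\CP^2)^\times$ is an embedded $2$-sphere with $[L']\cdot[\Delta_0]=1$. Since $K$ is homotopic to the unknot in $S^3$, there is a generically immersed annulus in a collar $S^3\times[0,1]\subseteq(\CP^2)^\times$ from $K$ to $\partial\Delta_0$, with finitely many transverse double points; gluing it to $\Delta_0$ gives a generically immersed disk $D\subseteq(\CP^2)^\times$ with $\partial D=K$, with $[D]=[\Delta_0]$, and (after a small isotopy of $L'$) meeting $L'$ transversely in a single point $p$ lying in the regular part of $D$. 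Deleting from $L'$ a meridian disk of $D$ around $p$ exhibits a meridian of $D$ as nullhomotopic in $(\CP^2)^\times\sm D$; since this meridian normally generates $\pi_1((\CP^2)^\times\sm D)$ and $(\CP^2)^\times$ is simply connected, $\pi_1((\CP^2)^\times\sm D)$ is trivial.

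\emph{Step 2: removing the double points.} I would now invoke the disk embedding theorem in the version permitting an algebraically transverse dual sphere of odd self-intersection --- here $[L']\cdot[L']=1$. Such a dual sphere cannot be made framed, and so there is a single residual $\Z/2$-valued obstruction $\operatorname{km}(D)$ to homotoping $D$ rel $\partial$, through immersions with boundary $K$, to a locally flat embedding. The crux is the identification $\operatorname{km}(D)=\Arf(K)$. I would establish this by a Guillou--Marin-type computation: choose a Seifert surface $F$ for $K$, push it slightly into $B^4\subseteq(\CP^2)^\times$, and cap $D$ along $K$ to obtain a closed generically immersed \emph{characteristic} surface $\widehat D=D\cup F$ in $\CP^2$ of genus $g(F)$ representing the generator of $H_2(\CP^2;\Z)$. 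The obstruction is then computed from the $\Z/2$-quadratic refinement of the intersection form on $H_1(\widehat D;\Z/2)\cong H_1(F;\Z/2)$, which on $F$ is the mod $2$ reduction of the Seifert form of $K$ and whose Arf invariant is, by definition, $\Arf(K)$. Granting this, the hypothesis $\Arf(K)=0$ forces $\operatorname{km}(D)=0$, so Freedman's theorem upgrades $D$ to a locally flat embedded disk in $(\CP^2)^\times$ with boundary $K$, whence $g^\top_{\CP^2}(K)=0$.

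\emph{Main obstacle.} The fundamental-group computation in Step 1 is what brings Freedman's machinery into play; everything else there is general position. The real work is in Step 2: formulating the disk embedding theorem with a \emph{non-framed} algebraically transverse dual sphere and verifying that the one surviving $\Z/2$ obstruction is exactly $\Arf(K)$. (This identification would also show that $\Arf(K)=0$ is necessary for $K$ to bound a locally flat disk in the generating class.)
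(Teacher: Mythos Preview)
Your strategy is essentially the paper's: build a generically immersed disk in $(\CP^2)^\times$ representing the generator, equipped with the algebraically dual sphere $\CP^1$, and then apply Freedman--Quinn's Theorem~10.5(1) (stated in the paper as \cref{thm:DET-dual}) once the secondary $\Z/2$ obstruction is identified with $\Arf(K)$. The difference is in how that identification is carried out. The paper works entirely in $B^4$ first: it takes any generically immersed disk $\Delta$ for $K$ in $B^4$, arranges trivial self-intersection number by adding local kinks, picks framed Whitney disks $\{W_i\}$ there, and invokes the Matsumoto/Freedman--Kirby formula $\Arf(K)=\sum_i \Delta\cdot\mathring W_i\pmod 2$ (\cref{prop:arf-whitney}) \emph{before} ever mentioning $\CP^2$. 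Only then does it tube $\Delta$ into $\CP^1$; since $\CP^1$ is embedded and disjoint from the $W_i$, the very same Whitney disks verify the hypothesis of \cref{thm:DET-dual} for the tubed disk. This sidesteps any closed-surface computation.

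Your Step~2, by contrast, caps $D$ with a pushed-in Seifert surface to get an \emph{immersed} characteristic surface $\widehat D$ in $\CP^2$ and appeals to a Guillou--Marin/Rokhlin computation. That is a legitimate route, but as you acknowledge it is where the real work lies, and your sketch has loose ends: you never arrange the self-intersection number of $D$ to vanish (needed for Whitney disks and hence for the $\operatorname{km}$ invariant to be defined), and the quadratic-form identification on $H_1(\widehat D;\Z/2)$ must account for the double points of $D$, since the standard Guillou--Marin statement is for embedded characteristic surfaces. Also, your $\pi_1$ computation in Step~1 is unnecessary for the version of Freedman--Quinn used here, which only requires the ambient manifold to be simply connected together with an algebraically dual sphere. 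The paper's approach buys you a one-line verification of the obstruction via \cref{prop:arf-whitney} in $B^4$, whereas yours would require setting up the immersed Rokhlin formula carefully.
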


Note that the above result does not give a complete characterization
of knots which are topologically slice in $\CP^2$, since for example
the right-handed trefoil $T$ satisfies
$g_{\CP^2}(T)=g^\top_{\CP^2}(T)=0$, as an unknotting number one knot,
but has $\Arf(T)=1$. The proof of \cref{thm:arf-zero} uses a result of
Freedman and Quinn on when an immersed disk with an algebraically dual
sphere is homotopic to a locally flat
embedding~\cite{FQ}*{Theorem~10.5\,(1)} (see \cref{thm:DET-dual}).

The $\CP^2$-genus was previously studied by~\cites{yasuhara91,
  yasuhara92,aitnouh,pichelmeyer}. Specifically,
Yasuhara~\cites{yasuhara91,yasuhara92} studied the sliceness of
certain torus knots in $\CP^2$. Pichelmeyer~\cite{pichelmeyer} studied
the $\CP^2$-genus for low crossing knots and alternating knots. Ait
Nouh showed in \cite{aitnouh} that for $3 \leq q \leq 17$,
\[
g_{\CP^2}(T_{2, q}) = g_{4}(T_{2,q}) - 1 = \frac{q-3}2,
\]
and asked whether the equality
\[
g_{\CP^2}(T_{p,q}) = g_{4}(T_{p,q}) - 1 = \frac{(p-1)(q-1)}2-1
\]
holds for all $p, q >0$. \cref{thm:trick} shows that this is not the case, and yields the following corollary.

\begin{corollary}\label{corollary}
For every $n \geq 1$,
\[
g_{\CP^2}(T_{n,n-1}) \leq
\begin{cases}
g_{4}(T_{n,n-1}) - \frac{n-2}{2} & \textrm{if } n \equiv 0 \mod2 \\
g_{4}(T_{n,n-1}) - \frac{n-1}{2} & \textrm{if } n \equiv 1 \mod2.
\end{cases}
\]
\end{corollary}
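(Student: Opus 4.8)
The corollary is the special case $q=n-1$ of \cref{thm:trick}, so the substantive work lies in \cref{thm:trick}; what follows is first a direct, self-contained construction (which in fact over-proves the stated inequality), and then a remark on how the bookkeeping lines up.

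For the direct construction, consider the degree-$n$ plane curve $C=\{[x:y:z]\in\CP^2 : x^n=y^{n-1}z\}$. A Jacobian computation shows $C$ is irreducible with a single singular point $p=[0:0:1]$, and in the affine chart $z=1$ the germ of $C$ at $p$ is $\{x^n=y^{n-1}\}$, which has one Puiseux pair $(n-1,n)$: it is therefore unibranch, its link is the positive torus knot $T_{n,n-1}$, and its $\delta$-invariant is $\tfrac{(n-1)(n-2)}{2}$. Since a plane curve of degree $n$ has arithmetic genus $\tfrac{(n-1)(n-2)}{2}$, the curve $C$ is rational; being unibranch, it is homeomorphic to $S^2$. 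By Milnor's conical structure theorem, $C\cap B_\varepsilon(p)$ is the cone on $T_{n,n-1}$ for small $\varepsilon$, so $F:=C\setminus\mathring{B_\varepsilon(p)}$ is a smooth, compact, connected, oriented, properly embedded surface in $\CP^2\setminus\mathring{B_\varepsilon(p)}\cong(\CP^2)^\times$; it is a disk ($S^2$ with an open disk removed), and $\partial F=T_{n,n-1}$. Hence $g_{\CP^2}(T_{n,n-1})=0$. Writing $c_n=\lfloor\tfrac{n-1}{2}\rfloor$, which equals $\tfrac{n-2}{2}$ for $n$ even and $\tfrac{n-1}{2}$ for $n$ odd, the stated bound is $g_4(T_{n,n-1})-c_n$, and since $g_4(T_{n,n-1})=\tfrac{(n-1)(n-2)}{2}\ge c_n$ (classical; or the local Thom conjecture) this bound is $\ge 0=g_{\CP^2}(T_{n,n-1})$, so the corollary holds. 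The only loose end is the one-line parity check identifying $c_n$.

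The step I expect to need the most care is the local analysis of $C$ at $p$: that $[0:0:1]$ is the \emph{only} singular point and that its link is \emph{exactly} $T_{n,n-1}$ with the claimed $\delta$-invariant. Beyond that, since $C$ already proves the stated inequality, I would expect the paper's route through \cref{thm:trick} to be a refinement that additionally controls the homology class of the surface in $H_2((\CP^2)^\times,\partial)\cong\Z$ — the data needed for the improved minimal-genus bounds in $\CP^2\#\CP^2$ in Subsection~\ref{sec:absolute2CP2}. In that constrained situation the ``expensive'' class $n\cdot h$ carried by $C$ is not allowed, so keeping the class small forces some genus back in; the even/odd dichotomy in $c_n$ then records the $\delta$-invariants of the auxiliary singular points (equivalently, the number of $1$-handles) this reintroduces, depending on the parity of $q=n-1$. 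Granting \cref{thm:trick}, the corollary is immediate by specialization together with the arithmetic above.
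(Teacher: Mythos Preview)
Your final sentence is correct and matches the paper exactly: the corollary is obtained from \cref{thm:trick} by setting $d=0$ and doing the arithmetic (and the paper notes, as you anticipated, that this even bounds the null-homologous genus).

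However, your ``direct, self-contained construction'' contains a genuine error of orientation that invalidates it. The link of the singularity of $C=\{x^n=y^{n-1}z\}$ at $[0\!:\!0\!:\!1]$ is indeed the \emph{positive} torus knot $T_{n,n-1}$, but this is its type in the small sphere $\partial B_\varepsilon(p)$ oriented as the boundary of the \emph{ball}. The boundary of $(\CP^2)^\times=\CP^2\smallsetminus\mathring{B_\varepsilon(p)}$, oriented as the boundary of the complement, carries the \emph{opposite} orientation; under the paper's identification $\partial(\CP^2)^\times\cong S^3$ your knot is therefore the mirror $-T_{n,n-1}$. So what your rational cuspidal curve actually proves is $g_{\CP^2}(-T_{n,n-1})=0$, which is exactly the slice disk the paper writes down in the proof of \cref{thm:2cp2} (see \cref{fig:cable}), not $g_{\CP^2}(T_{n,n-1})=0$.

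This is not a harmless sign slip: the conclusion $g_{\CP^2}(T_{n,n-1})=0$ via a degree-$n$ disk is obstructed. For instance, for $n=3$ take $d=3$ and $p=3$ in \cref{lemma}\,(3): one gets $2g+1\ge |3-\sigma_3(T_{2,3})|=|3-(-2)|=5$, so no genus-$0$ degree-$3$ surface for $T_{2,3}$ exists (while $-T_{2,3}$ passes, since $\sigma_3(-T_{2,3})=2$). For $n=4$ and $d=4$, \cref{lemma}\,(2) gives $2g+1\ge|8-1-\sigma(T_{3,4})|=|7-(-6)|=13$, again ruling out your disk. Thus your curve does not over-prove the corollary; it proves a statement about the mirror, and the corollary for $T_{n,n-1}$ genuinely requires the positive-genus surfaces produced by \cref{thm:trick}.
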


Indeed the above inequality holds for the least genus of
null-homologous surfaces bounded by $T_{n,n-1}$, as we indicate in the
proof.

\subsection{Relative minimal genus problems in other 4-manifolds}
Given a closed, smooth $4$-manifold $M$, let $M^\times$ denote the
punctured manifold $M\sm \mathring{B^4}$. The \emph{$M$-genus} of a
knot $K\subseteq S^3$, denoted by $g_M(K)$, generalizes
the definition of Equation~\eqref{eq:mingenus} and is defined as
\[
g_M(K)=\min \{ g(\Sigma )\mid i\colon \Sigma \to M^{\times}, i(\partial \Sigma )=K\},
\]
where the minimum is taken over all
smooth, compact, orientable, properly embedded surface $\Sigma \subseteq
M^\times$ bounded by $K\subseteq \partial M^\times\cong S^3$. A knot
$K$ is said to be \emph{slice} in $M$ if $g_M(K)=0$. One may also
consider the \emph{topological $M$-genus}, denoted by $g^\top_M(K)$,
the least genus of a surface $F$ as above which is only locally flat
and embedded. Note that $g_M(K) \leq g_{S^4}(K)$ and $g^\top_M(K) \leq
g^\top_{S^4}(K)$ for all $M$.

The smooth and topological $S^4$-genera correspond to the usual smooth
and topological slice genera of knots and have been studied
extensively. In particular, there exist infinitely many knots with
trivial topological $S^4$-genus and nontrivial smooth
$S^4$-genus~\cites{Gom86,endo}. Any such knot can be used to produce a
nonstandard smooth structure on $\RR^4$~\cite{gompf-infinite} and in
general slicing knots in $S^4$ is connected to major open questions in
$4$-manifold topology, such as the smooth Poincar\'e conjecture and
the topological surgery
conjecture~\cites{man-and-machine,casson-freedman-atomic} (see
also~\cite{DET-book-enigmata}).  Slicing knots in more general
$4$-manifolds has also been fruitful, e.g.\ in revealing structure
within the knot concordance group~\cites{COT1, COT2, CT, CHL09,
  CHL11}, and in distinguishing between smooth concordance classes of
topologically slice
knots~\cites{CHH,cochran-horn,cha-kim-bipolar}. In~\cite{manolescu-marengon-piccirillo}
it was shown that the set of knots which bound smooth, null-homologous
disks in a $4$-manifold $M$ can distinguish between smooth structures
on $M$, that is,
there are examples of homeomorphic smooth
4-manifolds $M_1, M_2$ and a knot $K\subset S^3$ which bounds a
smooth, null-homologous disk in $M_1^\times$, but does not bound such a
disk in $M_2^\times$.
It is an open question whether the set of slice knots can
distinguish between smooth structures on a manifold.

It was shown in~\cite{KPRT}*{Corollary~1.12} using Freedman's disk
embedding theorem~\cites{F,FQ} that $g^\top_M(K)=0$ for every closed,
simply connected $4$-manifold $M$ other than $S^4$, $\CP^2$,
$\ol{\CP^2}$, $*\CP^2$, and $*\ol{\CP^2}$.
Here $\ol{M}$ denotes the 4-manifold $M$ with its orientation
reversed, and $*\CP ^2$ is the topological 4-manifold homotopy
equivalent, but not homeomorphic, to $\CP ^2$, constructed by Freedman in~\cite{F}. 
As mentioned before,
$g^\top_{\CP^2}(K)\leq 1$ for every knot $K$. Moreover,
$g^\top_{*\CP^2}(K)\leq 1$ for every knot $K$ as well. Further,
$g^\top_{\ol{\CP^2}}(K)=g^\top_{\CP^2}(-{K})$ for every knot $K$,
where $-{K}$ is the mirror image.  Therefore, the topological
problem for closed, simply connected $4$-manifolds with positive second
Betti number is
better understood. By contrast, many open questions remain in the smooth
setting. It is a straightforward consequence of the Norman trick that
all knots in $S^3$ are slice in both $S^2\times S^2$ and $S^2\wt\times
S^2 \cong \CP^2 \#
\overline{\CP^2}$~\citelist{\cite{norman-trick}*{Corollary~3 and
    Remark}\cite{suzuki}*{Theorem 1}} (see
also~\cites{ohyama,livingston-nullhom}).  By Theorem~\ref{thm:arb-large}
the
$\CP^2$-genus can be arbitrarily large.  It is open whether there is a
knot that is not slice in the $K3$-surface -- if such a knot exists,
its unknotting number must be more than $21$~\cite{marengon-mihajlovic}.

\subsection{Minimal genus problem in \texorpdfstring{$\CP^2\#\CP^2$}{CP2\#CP2}}
\label{sec:absolute2CP2}
Above we provided a possible extension of the Thom conjecture from
$\CP ^2$ to $(\CP ^2)^\times$, by replacing closed surfaces with
surfaces having boundary knotted in $S^3=\partial (\CP ^2)^\times$.
Another extension of Kronheimer-Mrowka's seminal result on $G_{\CP
  ^2}$ would be the determination of the corresponding function for
other closed 4-manifolds, for example $G_{\CP ^2\# \CP ^2}$ of $\CP
^2\# \CP ^2$.  (The cases of $\CP ^2 \# {\overline {\CP}}^2$ and
$S^2\times S^2$ were resolved by Ruberman in \cite{Ruberman}.)
Gauge-theoretic methods are less effective for this $4$-manifold --
the lower bounds, resting on variants of Furuta's
$\frac{10}{8}$-theorem, from \cite{Bryan} provide sharp results only
in a handful of cases (see \cite{Bryan}*{Corollary~1.7}), while
constructions are rare.

Let $(n,d)=nh_1+dh_2 \in H_2 (\CP ^2\# \CP ^2; \Z )\cong \Z \oplus \Z$
denote a second homology class in $\CP ^2\# \CP ^2$, with the
convention that $h_i$ is a generator in the $i^{th}$ summand and
$n,d\in \Z$.  A na{\"\i}ve approach to construct a surface
representing the class $(n,d)$ is to take the connected sum of the
genus-minimizing surfaces in the two $\CP^2$-components (representing
the desired homology class), giving a surface of genus $G_{\CP
  ^2}(n\cdot h)+G_{\CP ^2}(d\cdot h)$.  This straighforward upper
bound has been improved by 1 in \cite{aitnouh-2cp2} for an infinite
family of homology classes; below we implement roughly the same idea
to show that the difference between the value of $G_{\CP ^2\# \CP^2}$
can arbitrarily differ from the above na{\"\i}ve bound.  For all $k
\geq 0$, let $\varG(k) := (k-1)(k-2)/2$. This function agrees with
$G_{\CP^2}(k\cdot h)$ for all $k$ except $0$, where it is off by $1$.

\begin{theorem}\label{thm:2cp2}
  Suppose that $(n,d)\in H_2(\CP ^2\# \CP ^2; \Z )$ is a homology class
  with $n>d\geq 0$.
  \begin{itemize}
  \item For $n\equiv d \mod{2}$ we have
    \[
    \left(\varG(n)+\varG(d)\right)-G_{\CP^2\#\CP^2}(n,d)\geq \frac{n-3d}{2};
    \]
      \item for $n\not\equiv d \mod{2}$ we have
\[
    \left(\varG(n)+\varG(d)\right)-G_{\CP^2\#\CP^2}(n,d)\geq \frac{n-3d+1}{2}.
    \]
  \end{itemize}
\end{theorem}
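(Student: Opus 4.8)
The plan is to build, for each class $(n,d)$ with $n>d\ge 0$, a connected closed surface of the required genus; when $n\le 3d$ the asserted inequality is weaker than the na\"ive bound $\varG(n)+\varG(d)$ and so holds automatically, so assume $n>3d$. Write $\CP^2\#\CP^2=(\CP^2)^\times\cup_{S^3}(\CP^2)^\times$, the two punctured copies being glued along their boundary $S^3$ by an orientation-reversing diffeomorphism. A closed surface meeting this $S^3$ transversely in a knot $K$ is the union of a properly embedded surface in the first copy with boundary $K$ and a properly embedded surface in the second copy with boundary the mirror image $\ol K$ (the mirror appearing because the gluing reverses orientation); its genus is the sum of the genera of the two pieces, and its homology class is assembled from the two degrees.

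First I would invoke the classical fact --- implicit in Yasuhara's study of torus knots in $\CP^2$ and ultimately in the theory of singular plane curves --- that for every $m\ge 1$ the negative torus knot $\ol{T_{m,m-1}}$ bounds a \emph{smoothly embedded disk} of degree $m$ in $(\CP^2)^\times$: take a degree-$m$ rational curve in $\CP^2$ with a single singular point whose link is $\ol{T_{m,m-1}}$ (for instance a projective, suitably conjugated, transform of $\{z^m+w^{m-1}t=0\}$) and delete a small ball around that point. Placing such a disk, with $m=n$, in the first punctured copy contributes degree $n$, genus $0$, and boundary $\ol{T_{n,n-1}}$, which under the gluing is identified with the \emph{positive} torus knot $T_{n,n-1}$ in the second copy.

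In the second copy I then need a properly embedded surface bounded by $T_{n,n-1}$ of degree $d$ and of small genus. I would start from the \emph{null-homologous} surface bounded by $T_{n,n-1}$ of genus $\varG(n)-\lfloor\tfrac{n-1}{2}\rfloor$ produced by the proof of \cref{corollary}, and incorporate the degree-$d$ class by taking an ambient connected sum with a smooth degree-$d$ curve (of genus $\varG(d)$), after first isotoping the two surfaces to meet transversely in the fewest possible points and then tubing together the resulting oppositely-signed intersection pairs. Gluing this to the disk of the previous paragraph yields a connected closed surface representing $(n,d)$; accounting carefully for its genus --- the $0$ from the disk, the $\varG(n)-\lfloor\tfrac{n-1}{2}\rfloor$ from \cref{corollary}, the $\varG(d)$ from the curve, and the tubes --- should reproduce the stated bound, the two cases of the theorem arising from combining the parity in \cref{corollary} with the parity of $d$.

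The step I expect to be the main obstacle is exactly this last count: pinning down how cheaply the degree-$d$ class can be absorbed, i.e.\ bounding the minimal number of intersection points of the \cref{corollary} surface with a degree-$d$ curve and hence the number of tubes, since that is precisely where the gap between $\varG(n)+\varG(d)$ and the improved bound is spent, and where the improvement $\tfrac{n-3d}{2}$ (resp.\ $\tfrac{n-3d+1}{2}$), rather than something smaller, has to emerge. A second, routine but fiddly, point is the orientation and degree bookkeeping across the orientation-reversing gluing $S^3\to S^3$: one must verify that the mirror of the negative torus knot on the first side really is the positive torus knot to which \cref{corollary} applies, and that the two degrees assemble to $(n,d)$ rather than $(n,-d)$.
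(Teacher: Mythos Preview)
Your overall architecture is the paper's: split $\CP^2\#\CP^2$ along the connect-sum sphere, put a degree-$n$ slice disk for $-T_{n,n-1}$ in one punctured copy, and a degree-$d$ surface bounded by $T_{n,n-1}$ in the other.  The paper produces the slice disk exactly as you describe (phrased there as a Rolfsen twist on a $(+1)$-framed handle rather than a singular rational curve), and then takes a boundary connected sum of the two punctured copies and caps off $T_{n,n-1}\#(-T_{n,n-1})$ with its ribbon disk in $B^4$, instead of gluing directly along $S^3$; the difference is cosmetic and your orientation bookkeeping worry is routine.

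The substantive divergence, and the genuine gap, is in how you build the second piece.  The paper does \emph{not} start from the null-homologous surface of \cref{corollary} and then tube in a degree-$d$ curve; it invokes \cref{thm:trick} directly at the target degree $d$.  That theorem already hands you a smooth surface in $(\CP^2)^\times$ of degree $d$, bounded by $T_{n,n-1}$, whose genus is exactly
\[
\varG(n)+\varG(d)-\tfrac{n-3d}{2}\quad\text{(resp.\ }-\tfrac{n-3d+1}{2}\text{)},
\]
so after gluing to the disk the arithmetic is a one-line check and the two parity cases come straight from the two cases of \cref{thm:trick}.

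By contrast, your tubing step is not just bookkeeping but an actual hole.  To land on the stated bound you would need the number $k$ of $\pm$-pairs of intersections between the degree-$0$ surface from \cref{corollary} and a degree-$d$ curve to satisfy roughly $k\le\lfloor\tfrac{n-1}{2}\rfloor-\tfrac{n-3d}{2}\approx\tfrac{3d-1}{2}$; in particular for $d=1$ you need $k\le 1$.  Nothing in the construction of \cref{corollary} gives any control over this \emph{geometric} intersection number---the algebraic intersection is $0$, but the surface was built by a blow-up and there is no reason it avoids a generic degree-$d$ curve.  I do not see how to bound $k$ without rebuilding the surface with $d$ baked in from the start, which is exactly what \cref{thm:trick} does.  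Replace your second paragraph's construction with a direct appeal to \cref{thm:trick} at degree $d$ and the proof is complete.
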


Note that this bound is significant only when the right hand side is
positive; in this case it shows that for suitably chosen $(n,d)$ the
difference between $G_{\CP ^2\# \CP ^2}(n,d)$ and the na{\"i}ve guess
$G_{\CP ^2}(n)+G_{\CP ^2}(d)$ can be arbitrarily large. An
independent proof of Theorem \ref{thm:2cp2} will appear in a
forthcoming paper of Stefan Mihajlovi\'c.

\subsection*{Acknowledgements}

We are grateful to Stefan Mihajlovi\'c for pointing out an off-by-one error in the $d=0$ case of Theorem \ref{thm:2cp2} in a previous version of the paper, and to the anonymous referee for their comments.

MM acknowledges that: This project has received funding from the
European Union’s Horizon 2020 research and innovation programme under
the Marie Sk{\l}odowska-Curie grant agreement No.\ 893282.

 AS was partially supported by the \emph{\'Elvonal (Frontier) grant}
 KKP144148 of the NKFIH.  This material is partly based upon work
 supported by the National Science Foundation under Grant
 No.\ DMS-1928930, while AS was in residence at the Simons Laufer
 Mathematical Sciences Institute (previously known as MSRI) Berkeley,
 California, during the Fall 2022 semester.
 
\section{Genera of knots}
Given a closed 4-manifold $M$, we identify $H_2(M^\times, \partial (M^\times);\Z) \cong H_2(M^\times;\Z) \cong H_2(M, \Z)$.
We will use the following results from~\cites{viro70,gilmer81,Ozsvath-Szabo:2003-1}. 
\begin{lemma}\label{lemma}
Let $K\subseteq S^3$ be a knot which bounds a smooth, compact, connected, orientable, properly embedded surface $\Sigma$ of genus $g$ in $(\CP^2)^\times$ such that $[\Sigma]=d[\CP^1] $ in $H_2((\CP^2)^\times;\Z)$.  
\begin{enumerate}
\item \cite{Ozsvath-Szabo:2003-1}*{Theorem~1.1} For the $\tau$-invariant from Heegaard Floer homology, we have \[g \geq -\tau(K) + \frac{|d|(1-|d|)}{2}.\]
\item \cites{gilmer81,viro70} If $d$ is even, then \[2g+1 \geq \left| \frac{d^2}{2} - 1- \sigma(K)\right|.\] 
\item \cites{gilmer81,viro70} If $d$ is divisible by an odd prime $p$,  then 
  \[2g+1 \geq \left| \frac{p^2-1}{2p^2} d^2 - 1 - \sigma_{p}(K) \right|,\]  where $\sigma_p(K):= \sigma_K\left(e^{\pi i \frac{p-1}{p}}\right)$
  and $\sigma_K$ denotes the Levine-Tristram signature function.
\end{enumerate}
\end{lemma}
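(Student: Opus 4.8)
Each of the three items is, up to a translation of conventions, a special case of a known genus bound, so the plan is to deduce them from the cited references rather than reprove them from scratch; I will nonetheless indicate the arguments.

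For item (1), I would read the inequality off \cite{Ozsvath-Szabo:2003-1}*{Theorem~1.1}, whose natural home is a negative--definite $4$--manifold bounded by $S^3$. The single preliminary move is to reverse orientations: if $\Sigma\subseteq(\CP^2)^\times$ has genus $g$, satisfies $[\Sigma]=d[\CP^1]$, and has $\partial\Sigma=K$, then the same surface viewed in $(\ol{\CP^2})^\times=\ol{(\CP^2)^\times}$ has genus $g$, class $d[\ol{\CP^1}]$, and bounds the mirror $-K$ in $\partial(\ol{\CP^2})^\times\cong S^3$. Applying the Ozsv\'ath--Szab\'o bound in the negative--definite manifold $\ol{\CP^2}\sm\mathring{B^4}$ and using $\tau(-K)=-\tau(K)$ then yields
\[
g\ \geq\ -\tau(K)+\frac{|d|(1-|d|)}{2}.
\]
The one thing to check carefully is that the normalising term produced by the $\mathrm{Spin}^c$/adjunction bookkeeping on $\ol{\CP^2}$ equals $\tfrac{1}{2}|d|(|d|-1)$; this is the minimum of $|[\Sigma]^2+\langle c_1(\mathfrak s),[\Sigma]\rangle|$ over characteristic classes $c_1(\mathfrak s)$ of $\ol{\CP^2}$, evaluated on $d[\ol{\CP^1}]$.

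For items (2) and (3) I would run the classical cyclic branched cover computation. Take $p=2$ for (2), and an odd prime $p\mid d$ for (3). Since $p$ divides $d$, the class $[\Sigma]\in H_2((\CP^2)^\times,\partial;\Z)$ is divisible by $p$, so the meridian of $\Sigma$ gives a surjection $H_1((\CP^2)^\times\sm\Sigma;\Z)\to\Z/p$ and hence a $p$--fold cyclic cover of $(\CP^2)^\times\sm\Sigma$ extending to a branched cover $X_p\to(\CP^2)^\times$ with $\partial X_p$ the $p$--fold branched cover of $S^3$ along $K$. Three ingredients then combine. First, one computes $\sigma(X_p)$ from the $G$--signature theorem for the deck $\Z/p$--action; since $\partial X_p\neq S^3$ there is an Atiyah--Patodi--Singer boundary defect, and the content of \cite{viro70} and \cite{gilmer81} is that this defect equals the Levine--Tristram signature $\sigma_p(K)$ (respectively $\sigma(K)$ when $p=2$, where one may instead cap $K$ off with a pushed--in Seifert surface and use the closed double branched cover of $\CP^2$), while the self--intersection $[\Sigma]^2=d^2$ enters with exactly the coefficient $\tfrac{p^2-1}{2p^2}$ (respectively $\tfrac12$) appearing in the statement. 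Second, one computes $b_2(X_p)$ from $\chi(X_p)=p\,\chi((\CP^2)^\times)-(p-1)\,\chi(\Sigma)$ together with $b_1(X_p)=b_3(X_p)=0$ --- the latter because $\partial X_p$ is a rational homology sphere and $\Sigma$ is connected --- so that $b_2(X_p)$ becomes an explicit affine function of $g$. Third, the preimage in $X_p$ of a hyperplane section of $\CP^2$ chosen disjoint from the removed ball and transverse to $\Sigma$ has positive self--intersection, so $b_2^+(X_p)\geq 1$ and hence $|\sigma(X_p)|\leq b_2(X_p)-2$. Substituting the first two computations into the third and rearranging then produces the inequalities of (2) and (3).

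The main obstacle I anticipate is the identification of the $G$--signature boundary defect with $\sigma_p(K)$, and with the correct overall sign --- i.e.\ the Casson--Gordon--type computation of the $\rho$--invariant of the $p$--fold branched cover of $(S^3,K)$ equipped with its canonical flat $\Z/p$--bundle --- and here I would simply invoke \cite{viro70} and \cite{gilmer81}. The two smaller technical points, $b_1(X_p)=0$ and $b_2^+(X_p)\geq 1$, both follow from the divisibility $p\mid d$ by short homological arguments but should not be glossed over. Since the rest of the paper uses only the displayed inequalities, in the write--up I would present (2) and (3) as specialisations of \cite{gilmer81} (and \cite{viro70}), recording the sketch above for the reader's convenience.
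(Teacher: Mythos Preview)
The paper does not supply a proof of this lemma at all: it is stated as a compilation of known results, with each item attributed directly to the cited sources (Ozsv\'ath--Szab\'o for (1), Gilmer and Viro for (2) and (3)). Your proposal therefore goes beyond what the paper does --- you are sketching the proofs of the cited theorems rather than merely invoking them --- and the sketches you give are the standard ones and are correct in outline.

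One small technical remark on your branched-cover argument for (2) and (3): from $b_2^+(X_p)\geq 1$ alone you only get $-\sigma(X_p)\leq b_2(X_p)-2$, not the two-sided bound $|\sigma(X_p)|\leq b_2(X_p)-2$; for the latter you also need $b_2^-(X_p)\geq 1$. This is easy to arrange (for instance, the branch set $\widetilde\Sigma\subset X_p$ has $[\widetilde\Sigma]^2 = d^2/p$ when $p\mid d$, wait --- more directly, the $\Z/p$--eigenspace decomposition of $H_2(X_p;\mathbb{C})$ already forces mixed signature once $p\geq 2$ and $d\neq 0$), but it should be mentioned if you intend to include the sketch. In any case, since the paper simply cites \cites{gilmer81,viro70,Ozsvath-Szabo:2003-1} without further argument, your write-up is already more detailed than what appears there.
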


First we construct knots with arbitrarily large $\CP^2$-genus, which is the main ingredient in the proof of \cref{thm:top-v-smooth}.

\begin{proposition}\label{prop:arb-large}
Fix $g_0\geq 0$ and $c_0>\frac{3}{2} \sqrt{2g_0+2}>1$. Let $K$ be a knot with vanishing  Levine-Tristram signature function and such that the $\tau$-invariant from Heegaard Floer homology satisfies \[-\tau(K)\geq g_0- \frac{c_0(1-c_0)}{2}.\] Then $g_{\CP^2}(K)\geq g_0$. 
\end{proposition}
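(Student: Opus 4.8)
The plan is to take an arbitrary smooth, compact, connected, orientable, properly embedded surface $\Sigma\subseteq(\CP^2)^\times$ of genus $g$ bounded by $K$, write $[\Sigma]=d[\CP^1]$, and show that the three inequalities of \cref{lemma} force $g\geq g_0$, splitting into cases according to the size and parity of $|d|$. Since the Levine--Tristram signature function of $K$ vanishes identically, we have $\sigma(K)=\sigma_K(-1)=0$ and $\sigma_p(K)=\sigma_K(e^{\pi i(p-1)/p})=0$ for every odd prime $p$, which is precisely what makes the signature-type estimates \cref{lemma}(2)--(3) usable.

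First I would handle the regime $|d|\leq c_0$ using only the $\tau$-bound \cref{lemma}(1). The function $t\mapsto t(1-t)/2$ is a downward parabola with vertex at $t=\tfrac12$, hence decreasing on $[\tfrac12,\infty)$; since $c_0>1$ its minimum on $[0,c_0]$ is attained at $c_0$, so $\frac{|d|(1-|d|)}{2}\geq\frac{c_0(1-c_0)}{2}$ for all $|d|\leq c_0$. Combining this with \cref{lemma}(1) and the hypothesis on $\tau(K)$ gives
\[
g\;\geq\; -\tau(K)+\frac{|d|(1-|d|)}{2}\;\geq\;\Bigl(g_0-\frac{c_0(1-c_0)}{2}\Bigr)+\frac{c_0(1-c_0)}{2}\;=\;g_0 .
\]
Since $c_0>1$ this regime includes $d=0,\pm 1$, exactly the values not reached by \cref{lemma}(2)--(3).

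Next I would treat $|d|>c_0$. Because $g_0\geq 0$ forces $c_0>\tfrac32\sqrt2>2$, in this regime $|d|\geq 3$. If $d$ is even, then $\tfrac{d^2}{2}-1>0$, so \cref{lemma}(2) with $\sigma(K)=0$ gives $2g+1\geq\tfrac{d^2}{2}-1$, i.e.\ $g\geq\tfrac{d^2}{4}-1>\tfrac{c_0^2}{4}-1$, and the hypothesis $c_0>\tfrac32\sqrt{2g_0+2}$ yields $\tfrac{c_0^2}{4}-1>\tfrac{9(g_0+1)}{8}-1\geq g_0$. If $d$ is odd, pick an odd prime $p\mid d$; since $p\geq 3$ we have $\tfrac{p^2-1}{2p^2}\geq\tfrac49$, so $\tfrac{p^2-1}{2p^2}d^2-1>0$ and \cref{lemma}(3) with $\sigma_p(K)=0$ gives $2g+1\geq\tfrac49 d^2-1$, i.e.\ $g\geq\tfrac29 d^2-1>\tfrac29 c_0^2-1$; here the constant $\tfrac32$ in the hypothesis is exactly what is needed, since then $\tfrac29 c_0^2-1>\tfrac29\cdot\tfrac94(2g_0+2)-1=g_0$. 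In every case $g\geq g_0$, and taking the minimum over all such $\Sigma$ gives $g_{\CP^2}(K)\geq g_0$.

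The only thing requiring care is the bookkeeping over $d$: checking that every integer is covered -- the small values $|d|\leq 1$ by the $\tau$-bound, the even values by Viro--Gilmer's ordinary signature bound, and the odd values $|d|\geq 3$ by the Levine--Tristram signature bound at some odd prime dividing $d$ -- and verifying that the weakest of the resulting estimates, the one coming from the prime $p=3$, still dominates $g_0$ under the stated constraint on $c_0$. There is no genuine obstacle beyond this case analysis, since all the analytic input is provided by \cref{lemma}.
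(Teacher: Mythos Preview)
Your proof is correct and follows essentially the same approach as the paper: apply the $\tau$-bound of \cref{lemma}(1) to handle $|d|\leq c_0$, then use the Viro--Gilmer signature bounds \cref{lemma}(2)--(3) (with the vanishing Levine--Tristram signatures) to handle $|d|>c_0$. The only cosmetic difference is that the paper cases on whether a chosen prime factor of $|d|$ equals $2$ or is odd, whereas you case on the parity of $d$ itself; both organizations cover all relevant integers and lead to the same estimates.
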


\begin{proof}
Let $\Sigma$ be a genus $g$ surface in  $(\CP^2)^{\times}$  with $\partial \Sigma=K$ and such that $[\Sigma]=d[\CP^1] $ in second homology. By~\cref{lemma}\,(1), we have that 
\begin{align*}
 g &\geq -\tau(K)+ \frac{|d|(1-|d|)}{2} \geq g_0 - \frac{c_0(1-c_0)}{2}+ \frac{|d|(1-|d|)}{2}.
\end{align*}
Then either $g\geq g_0$, in which case we are done, or $g < g_0$ and so $|d| > c_0>1$.  

So assume that $|d| > c_0$ and let $p$ be a prime factor of $|d|$. If $p=2$,  then by~\cref{lemma}\,(2), 
\[ 2g+1 \geq \frac{d^2}{2} - 1 > \frac{c_0^2}{2}-1 > \frac{9}{4} (2g_0+2)-1 > 2g_0+1. 
\]
It follows that $g \geq g_0$, as desired. 

If $p$ is odd, then $\frac{1}{2}- \frac{1}{2p^2} \geq\frac{1}{2}- \frac{1}{2(3)^2}= \frac{4}{9}$ and so by~\cref{lemma}\,(3) 
\[2g+1 \geq \frac{p^2-1}{2p^2} d^2 - 1 > \frac{p^2-1}{2p^2} c_0^2 -1 >\left(\frac{1}{2} -\frac{1}{2p^2}\right)\frac{9}{4}(2g_0+2)-1 \geq 2g_0+1. 
\]
It follows that $g \geq g_0$,  as desired. 
\end{proof}

\begin{proof}[Proof of~\cref{thm:top-v-smooth}]
It suffices to find topologically slice knots satisfying the hypothesis of \cref{prop:arb-large}. There are many such knots, e.g.\ one could take connected sums of a knot with trivial Alexander polynomial and negative $\tau$-invariant.
As a concrete example, let $K$ be the untwisted, negative clasped Whitehead double of the left-handed trefoil knot. By \cite{Hedden}*{Theorem 1.5}, we know that $\tau(K) = -1$, and since $\Delta_K(t) = 1$ it is topologically slice~\cite{FQ}*{Section~11.7}, hence its Levine-Tristram signatures all vanish. Then the knots $\#^\ell K$, for $\ell \gg 0$, satisfies the hypothesis of \cref{prop:arb-large} with $g_0=n$.
\end{proof}
\begin{remark}
  If one only desires a knot $K$ with $g_{\CP^2}^\top(K)=0$ and
  $g_{\CP^2}(K)\geq n$ for a given $n$, then
  by~\cref{thm:arf-zero,prop:arb-large} any knot $K$ with trivial Arf
  invariant, vanishing Levine-Tristram signature function, and
  satisfying $-\tau(K)\geq n- \frac{c_0(1-c_0)}{2}$ for
  $c_0>\frac{3}{2} \sqrt{2n+2}$, will work.
\end{remark}

\begin{remark}
  In~\cite{pichelmeyer}*{Conjecture~1}, Pichelmeyer conjectured that
  two knots with equal ordinary signature and Arf invariant must have
  equal $g_{\CP^2}$. The knots from \cref{thm:arb-large} with
  $g_{\CP^2}^\top(K_n) =0$ and $g_{\CP^2}(K_n)\geq n$ ($n\to \infty$)
  contradict the conjecture: since they are topologically slice in
  $B^4$, they have equal (indeed trivial) Arf invariant and
  Levine-Tristram signature function, and one can choose an infinite
  subfamily realizing distinct $\CP^2$-genera.
\end{remark}
Next we will prove \cref{thm:arf-zero} from the
introduction. For this we will need the following formulation of the
Arf invariant of a knot. Recall that the \emph{self-intersection
  number} of a properly, generically immersed disk $f\colon
(D^2,\partial D^2)\looparrowright (M,\partial M)$ restricting to an
embedding on the boundary, where $M$ is a simply connected
$4$-manifold, is the signed count of double points of $f$.
Here and in the rest of the paper, the symbol $\looparrowright$ denotes a \emph{generic immersion} of a compact surface, i.e.\ an immersion such that the singular set is a closed, discrete subset of $M$ consisting only of transverse double points, each of whose preimages lies in the interior of the surface. We will only use this notion where the target manifold is smooth, but we note that there is an analogous notion in the topological setting~\citelist{\cite{FQ}*{Chapter~1}\cite{Freedman-book-introDET}*{Section~11.1}}.  We will also use (possibly immersed) Whitney disks in the argument. For more details about Whitney disks, see, for example, \cite{Freedman-book-introDET}.

\begin{proposition}[\citelist{\cite{matsumoto78}\cite{freedman-kirby}\cite{CST-twisted-whitney}*{Lemma~10}}]\label{prop:arf-whitney}
Let $K\subseteq S^3$ be a knot bounding a generically immersed disk $\Delta$ in $B^4$ with trivial self-intersection number. This implies that there is a collection $\{W_i\}$ of framed, generically immersed Whitney disks pairing up the self-intersections of $\Delta$ and intersecting $\Delta$ in transverse double points in the interiors $\{\mathring{W}_i\}$.

For any such collection $\{W_i\}$, we have the equality 
\[\Arf(K) = \Sum_i \Delta \cdot \mathring{W}_i \mod{2}.\]
\end{proposition}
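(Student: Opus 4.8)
The plan is to treat the two assertions of the proposition separately: the existence of an admissible family $\{W_i\}$, and then the displayed formula. Throughout I use that $B^4$ is contractible.

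\emph{Existence of the Whitney disks.} Since the signed count of double points of $\Delta$ vanishes, the double points of $\Delta$ come in pairs of opposite sign. For each such pair choose a Whitney circle on $\Delta$, i.e.\ an embedded loop meeting each of the two local sheets in a single arc. As $B^4$ is simply connected this loop bounds a map of a disk, which by general position may be taken to be a generic immersion transverse to $\Delta$; pushing its boundary off the double points of $\Delta$ makes it a genuine Whitney disk, whose only intersections with $\Delta$ are transverse double points in the interior. Such a Whitney disk need not be correctly framed, but each boundary twist changes the relevant relative Euler number by $\pm 1$ at the cost of one additional transverse interior intersection with $\Delta$; performing these as needed produces the desired framed, generically immersed Whitney disks $\{W_i\}$.

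\emph{The parity is an invariant.} I would next show that $\sum_i \Delta\cdot\mathring W_i\in\Z/2$ is independent of all choices, hence equals an invariant $\mu(K)$ of the knot $K$. Two independence statements are needed. First, for a fixed $\Delta$, any two admissible families of Whitney disks are related by a short list of local moves --- changing the Whitney arc on $\Delta$, tubing a Whitney disk into $\Delta$ or into another Whitney disk, introducing or deleting a cancelling pair of $W$--$\Delta$ intersection points, and boundary twists --- and one checks that each such move changes $\sum_i \Delta\cdot\mathring W_i$ by an even number. Second, any two generically immersed disks in $B^4$ with boundary $K$ and vanishing self-intersection number are regularly homotopic rel $\partial$, and such a regular homotopy decomposes into isotopies, finger moves and Whitney moves, each carried out in a standard local model in which the count of $W$--$\Delta$ intersections visibly changes by an even amount (a finger move creating a small Whitney disk disjoint from $\Delta$). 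The same kind of argument shows in addition that $\mu$ is a concordance invariant, by tubing a concordance annulus onto $\Delta$, although this is not needed below.

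\emph{Evaluation of $\mu$.} It remains to identify $\mu(K)$ with $\Arf(K)$, which I would do on a Seifert-surface model. Let $\Sigma\subseteq S^3$ be a Seifert surface for $K$ of genus $g$, push its interior into $B^4$, and fix a symplectic basis $a_1,b_1,\dots,a_g,b_g$ of $H_1(\Sigma;\Z)$ represented by curves with the $a_i$ pairwise disjoint and disjoint from the $b_j$ except for $a_i\cdot b_i=1$. Each $a_i$ bounds a generically immersed disk $D_i$ in $B^4$ meeting $\Sigma$ only along $a_i$ (up to transverse interior points), and ambient surgery on $\Sigma$ along the $D_i$ yields a generically immersed disk $\Delta_0$ bounding $K$, whose double points occur in pairs admitting explicit Whitney disks assembled from parallel copies of the $D_i$ and of subsurfaces of $\Sigma$. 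Tracking the intersections $\Delta_0\cdot\mathring W_i$ through this surgery, the $i$-th handle contributes $\mathrm{lk}(a_i,a_i^+)\cdot\mathrm{lk}(b_i,b_i^+)$ modulo $2$, while all remaining terms (self-intersections of the $D_i$, mutual intersections $D_i\cap D_j$, framing-twist intersections, and so on) cancel in pairs modulo $2$. Hence $\mu(K)\equiv\sum_i \mathrm{lk}(a_i,a_i^+)\,\mathrm{lk}(b_i,b_i^+)\equiv\Arf(K)$, the last step being the classical Seifert-pairing description of the Arf invariant.

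The main obstacle is the evaluation step: organising the Whitney disks in the surgered model and verifying that all the auxiliary intersection terms genuinely cancel modulo $2$, leaving only the diagonal Seifert-form contributions. This is precisely the content of the results of Matsumoto and of Freedman--Kirby cited above (compare \cite{CST-twisted-whitney}*{Lemma~10}); alternatively, one can cap $\Delta$ off with a pushed-in Seifert surface to obtain a closed immersed surface in $S^4$ and read the statement off a Guillou--Marin-type congruence, at the cost of bookkeeping for the immersed version of that formula.
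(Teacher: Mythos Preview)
The paper does not supply its own proof of this proposition: it is quoted as a known result, with attribution to Matsumoto, Freedman--Kirby, and \cite{CST-twisted-whitney}*{Lemma~10}, and is then used as a black box in the proof of \cref{thm:arf-zero}. There is therefore no argument in the paper to compare your proposal against.

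That said, your sketch follows the standard route taken in those references. The existence paragraph is fine. In the invariance paragraph, your list of local moves is the right one, though the assertion that any two admissible Whitney-disk families for a fixed $\Delta$ are related by exactly these moves deserves a reference rather than a ``one checks''; similarly, the claim that a regular homotopy decomposes into finger moves and Whitney moves each changing the count by an even amount is the heart of the matter and is more than bookkeeping. In the evaluation paragraph you have correctly located the genuine work: showing that after ambient surgery on a pushed-in Seifert surface the cross-terms cancel mod~$2$ and only the diagonal Seifert-form contributions survive. You acknowledge that this is precisely the content of the cited results, so your write-up is really an outline of those proofs rather than an independent argument---which is appropriate here, since the paper itself treats the statement as a citation.
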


We will also need the following result of Freedman-Quinn, giving a
sufficient condition under which an immersion of a disk is homotopic to an embedding.\footnote{For the convenience of the reader
  we have stated~\cite{FQ}*{Theorem~10.5\,(1)} in the special case
  where the ambient manifold is simply connected. The more general
  formulation in~\cite{FQ} had an error, which was detected and
  corrected by Stong in~\cite{Stong}. The error is related to elements
  of order two in the fundamental group of the ambient manifold and is
  not relevant here. For more details, see~\cites{Stong,KPRT}.}  Since
the statement in \cite{FQ} does not match the one below exactly, we
point out that Theorem \ref{thm:DET-dual} is also an immediate
corollary of \cite{KPRT}*{Theorem 1.2}, using \cite{KPRT}*{Lemma 5.5}.

\begin{theorem}[\cite{FQ}*{Theorem~10.5\,(1)}]\label{thm:DET-dual}
Let $M$ be a simply connected $4$-manifold. Let $f\colon (D^2,\partial
D^2)\looparrowright (M,\partial M)$ be a proper, generic immersion
with trivial self-intersection number, restricting to an embedding on
the boundary, and admitting an algebraically dual sphere, i.e.\ there
is a sphere $C\colon S^2\looparrowright M$ with $f\cdot C\equiv
1\mod{2}$. Assume further that there exists a collection $\{W_i\}$ of
framed, generically immersed Whitney disks pairing up the
self-intersections of $f$ and intersecting the image of $f$ in
transverse double points in the interiors $\{\mathring{W}_i\}$ so that
$\sum_i f\cdot \mathring{W}_i\equiv 0\mod{2}$.

Then $f$ is homotopic to a locally flat embedding relative to the boundary. 
\end{theorem}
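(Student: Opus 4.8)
The plan is not to reprove this from scratch --- its content is essentially Freedman's disk embedding theorem --- but to extract it cleanly from the existing literature. The statement is the simply connected case of \cite{FQ}*{Theorem~10.5\,(1)} (one must use the version corrected by Stong, as noted in the footnote above, but the relevant subtlety concerns $2$-torsion in $\pi_1$ and is therefore vacuous here). Since the formulation in \cite{FQ} does not literally match the one stated, I would instead deduce the statement from \cite{KPRT}, whose surface embedding theorem is phrased in precisely the present language: apply \cite{KPRT}*{Theorem~1.2} together with \cite{KPRT}*{Lemma~5.5}.

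Concretely, the hypotheses above --- $M$ simply connected, $f\colon(D^2,\partial D^2)\looparrowright(M,\partial M)$ a proper generic immersion with trivial self-intersection number restricting to an embedding on $\partial D^2$, and an algebraically dual sphere $C$ with $f\cdot C\equiv 1\pmod 2$ --- are exactly the input for \cite{KPRT}*{Theorem~1.2}, which asserts that $f$ is homotopic relative to the boundary to a locally flat embedding if and only if its Kervaire--Milnor obstruction $\operatorname{km}(f)\in\Z/2$ vanishes. So the entire task reduces to showing that the Whitney-disk hypothesis forces $\operatorname{km}(f)=0$, and this is exactly what \cite{KPRT}*{Lemma~5.5} provides: it identifies $\operatorname{km}(f)$ with $\sum_i f\cdot\mathring{W}_i\pmod 2$ for \emph{any} collection $\{W_i\}$ of framed, generically immersed Whitney disks pairing up the self-intersections of $f$ and meeting $f$ in transverse interior double points. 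In particular this mod-$2$ count does not depend on the collection, so the existence of one collection with $\sum_i f\cdot\mathring{W}_i\equiv 0\pmod 2$ --- which is the hypothesis --- already forces $\operatorname{km}(f)=0$, whence \cite{KPRT}*{Theorem~1.2} yields the embedding.

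I do not expect any step to be a genuine obstacle: all of the geometric difficulty lives inside the cited results --- ultimately inside Freedman's theorem underlying \cite{KPRT}*{Theorem~1.2} --- which I would treat as a black box. The only things requiring care are matters of bookkeeping: checking that the notions ``trivial self-intersection number'', ``framed Whitney disks meeting $f$ only in interior double points'', and ``algebraically dual sphere (mod $2$)'' used here coincide with those in \cite{KPRT}, and --- the one substantive point --- confirming that \cite{KPRT}*{Lemma~5.5} computes $\operatorname{km}(f)$ from an \emph{arbitrary} admissible Whitney collection rather than merely from a preferred one constructed in its proof, so that vanishing of the count for the single collection posited in the hypothesis is sufficient. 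Granting these identifications, the conclusion follows immediately, which is the status the surrounding discussion ascribes to it.
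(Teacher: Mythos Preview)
Your proposal is correct and matches the paper's treatment exactly: the paper does not reprove this result but cites it as \cite{FQ}*{Theorem~10.5\,(1)} and explicitly notes that, since the formulation there does not literally match, the statement is also an immediate corollary of \cite{KPRT}*{Theorem~1.2} together with \cite{KPRT}*{Lemma~5.5}. Your derivation via the Kervaire--Milnor obstruction is precisely the route the paper points to.
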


Note that the above result differs from that proven by Freedman in~\cite{F} in that the algebraically dual 
sphere $C$ is not required to have trivial normal bundle, necessitating the additional condition on the 
Whitney disks.

\begin{proof}[Proof of \cref{thm:arf-zero}]
Let $\Delta$ be a generically immersed disk bounded by $K$ in $B^4$,
obtained e.g.\ as the trace of a null-homotopy. By adding local
self-intersections if necessary, we can assume that the signed count
of double points of $\Delta$ equals zero, i.e.~that $\Delta$ has
vanishing self-intersection number. Then these double points can be
paired by a collection of framed, generically immersed Whitney disks
$\{W_i\}$, whose interiors intersect $\Delta$ in isolated points (see, for example, \cite{Freedman-book-introDET}*{Proposition~11.10}). By
\cref{prop:arf-whitney}, we know that $0=\Arf(K) = \sum_i \Delta \cdot
\mathring{W}_i \mod{2}$.

Perform a connected sum of $B^4$ with $\CP^2$, to obtain $(\CP^2)^\times$. By choosing the location of the connected sum carefully, we ensure that $\Delta$ and $\{W_i\}$ also lie in $(\CP^2)^\times$, and are in particular disjoint from $\CP^1\subseteq (\CP^2)^\times$. Tube $\Delta$ into $\CP^1$, and call the result $\Delta'$. Since $\CP^1$ is embedded, the collection $\{W_i\}$ pairs up all the double points of $\Delta'$. 
A push-off $C$ of $\CP^1$ provides an algebraically dual sphere for $\Delta'$. Then by \cref{thm:DET-dual}, the disk $\Delta'$ is homotopic to a locally flat embedding, which is the desired slice disk. 
\end{proof}

\begin{remark}
The disk that we have constructed in the proof of~\cref{thm:arf-zero} is a generator of $H_2((\CP^2)^\times;\Z)$. By~\cite{KPRT}*{Theorem~1.6}, one sees that a knot $K$ bounds a locally flat, embedded disk in $(\CP^2)^\times$ representing a generator of $H_2((\CP^2)^\times;\Z)$ if and only if $\Arf(K)=0$ (see also~\cite{KPRT}*{Proof of Corollary~1.12}).  
\end{remark}

We finish this section by considering the $\CP^2$-genera of certain torus knots.

\begin{theorem}
\label{thm:trick}
Let $n > d \geq 0$.
\begin{itemize}
    \item If $n \equiv d \mod2$, the torus knot $T_{n,n-1}$ bounds a surface $\Sigma \subseteq (\CP^2)^\times$ with degree $d$ and genus 
\[
g(\Sigma) = \left(\frac{n+d-2}2\right)^2 + \left(\frac{n-d-2}2\right)^2.
\]
\item If $n \not\equiv d \mod2$, the torus knot $T_{n,n-1}$ bounds a surface $\Sigma \subseteq (\CP^2)^\times$ with degree $d$ and genus 
\[
g(\Sigma) = \left( \frac{n+d-1}{2} \right) \cdot \left( \frac{n+d-3}{2} \right) + \left( \frac{n-d-1}{2} \right) \cdot \left( \frac{n-d-3}{2} \right)
\]
\end{itemize}
\end{theorem}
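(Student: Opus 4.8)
The plan is to build the surface $\Sigma$ explicitly by starting from a standard genus-minimizing surface for a torus knot in $B^4$ and then ``trading'' pieces of it against copies of $\CP^1$ inside $(\CP^2)^\times$, in a way that reduces the genus at the cost of raising the homological degree. The starting point is the observation that $T_{n,n-1}$ bounds in $B^4$ a smooth surface obtained from the standard Seifert surface (or from the positive-braid/algebraic-curve picture), and more importantly that torus knots $T_{a,b}$ arise as links of singularities, so a piece of a complex curve of degree $k$ in $\CP^2$ meeting a small sphere in $T_{n,n-1}$ can be used as a building block. Concretely, I would use the fact that the torus knot $T_{n,n-1}$ can be realized as the closure of a band sum / the boundary of a Murasugi-type plumbing that splits as a boundary connected sum of two pieces whose ``sizes'' are governed by $(n+d)/2$ and $(n-d)/2$ (up to parity corrections); this is exactly where the two squared terms in the genus formula come from. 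The degree-$d$ class is obtained because one of the two pieces is capped off using $d$ copies (with sign) of $\CP^1$.

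First, I would set up the even case $n\equiv d\bmod 2$, writing $a=(n+d)/2$ and $b=(n-d)/2$, so that $a+b=n$, $a-b=d$, and the target genus is $(a-1)^2+(b-1)^2$. The key step is to exhibit a cobordism in $(\CP^2)^\times$, built from $|d|$ copies of a neighborhood of $\CP^1$ (each contributing a handle-trading move of the ``Norman trick'' type, i.e.\ tubing into a sphere of square $+1$), from $T_{n,n-1}$ to a split union of two torus knots $T_{a,a-1}\sqcup T_{b,b-1}$ (or their mirrors), or alternatively directly to the two algebraic pieces. Then I cap each $T_{a,a-1}$ and $T_{b,b-1}$ by its genus-minimizing slice surface in $B^4$, which by the Thom conjecture / Kronheimer--Mrowka (or just by the standard Seifert algorithm on the positive braid, giving genus $g_4(T_{a,a-1}) = (a-1)(a-2)/2$) has genus $(a-1)(a-2)/2$ and $(b-1)(b-2)/2$ respectively. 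Adding the genus contributions of the $|d|$ trading moves and the two caps, and checking that the total is $(a-1)^2+(b-1)^2$, finishes the even case. I would track the degree carefully: each elementary trading move against $\CP^1$ changes $[\Sigma]$ by $\pm[\CP^1]$, so choosing all signs consistently yields degree exactly $d$.

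Second, the odd case $n\not\equiv d\bmod 2$ is handled by the same mechanism with the parity-corrected split: one writes the two factors as $(n+d-1)/2$ and $(n+d-3)/2$ paired together, and $(n-d-1)/2$ and $(n-d-3)/2$ paired together, reflecting that with mismatched parity one cannot split symmetrically and instead peels off torus knots $T_{m,m-1}$ with $m$ and $m-1$ on the two sides; the product formula $g_4(T_{m,m-1})=(m-1)(m-2)/2$ then reassembles into the stated expression. I would present this as a short variation of the even computation rather than repeating it in full, noting where the extra $\pm1$'s enter. I expect the main obstacle to be making the ``trading against $\CP^1$'' cobordism completely precise and bookkeeping its genus and degree contributions simultaneously: one must verify that the sequence of tube-and-compress moves genuinely produces an embedded (not merely immersed) surface of the claimed genus, that the homology class is exactly $d[\CP^1]$ and not merely congruent to it, and that the surface is connected. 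The remark after the theorem statement — that the bound also holds for null-homologous surfaces in the appropriate sense — suggests that the construction is flexible enough that the same building blocks, arranged with canceling signs, give the $d=0$ comparison; I would mention this but defer its use to the proof of \cref{corollary}.
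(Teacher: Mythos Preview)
Your substitution $a=(n+d)/2$, $b=(n-d)/2$ is exactly right, and the target genus $(a-1)^2+(b-1)^2$ is the correct goal. But the mechanism you propose does not produce it, for two linked reasons.

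First, the torus knots you land on are the wrong ones. You propose to cobord $T_{n,n-1}$ to $T_{a,a-1}\sqcup T_{b,b-1}$ and then cap with slice surfaces of genus $(a-1)(a-2)/2$ and $(b-1)(b-2)/2$. The caps alone contribute $\tfrac{(a-1)(a-2)+(b-1)(b-2)}{2}$, which falls short of $(a-1)^2+(b-1)^2$ by $\tfrac{a(a-1)+b(b-1)}{2}$; you assign this shortfall to ``the $|d|$ trading moves'', but $|d|=a-b$ bears no clean relation to that quantity (already for $a=3$, $b=1$ you would need genus $3/2$ per move). The paper instead cobords $T_{n,n-1}$ to the connected sum $T_{a,2a-1}\# T_{b,2b-1}$, whose $B^4$--genus is \emph{exactly} $(a-1)^2+(b-1)^2$, so the cobordism itself is a genus-zero concordance and there is nothing extra to account for.

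Second, the cobordism is not built from $|d|$ separate Norman-trick tubings into $\CP^1$. It is a \emph{single} blow-up: write $T_{a+b,a+b-1}$ as a closed braid, split the strands into bundles of size $a$ and $b$, and place one $(+1)$-framed unknot so that blowing down changes the $a\cdot b$ crossings between the two bundles, adding an extra full twist to each bundle and turning the knot into $T_{a,2a-1}\# T_{b,2b-1}$. The degree of the resulting surface is the linking number of the knot with this one attaching circle, namely $a-b=d$, not an accumulation of $d$ separate $\pm 1$ contributions. The odd-parity case is the identical trick with $a=(n+d-1)/2$, $b=(n-d-1)/2$ and target $T_{a,2a+1}\# T_{b,2b+1}$, whose $B^4$--genus is $a(a-1)+b(b-1)$.

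So the overall architecture ``cobord in $(\CP^2)^\times$ to simpler torus knots, then cap in $B^4$'' is correct, but the specific target knots, the number of blow-ups, and hence the genus arithmetic all need to be corrected as above before the argument goes through.
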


The proof of Theorem \ref{thm:trick} is based on a certain torus knot
twisting technique, which has already appeared in the literature (see
e.g.\ \cite{McCoy}*{Figure 7} for a recent appearance).

\begin{proof}
For the case $n \equiv d \mod 2$, set $a = \frac{n+d}2$ and $b = \frac{n-d}2$. \cref{fig:trick1} shows a concordance from $K_0 := T_{a, 2a-1} \# T_{b, 2b-1}$ to $K_1 := T_{n,n-1}$ in $\CP^2\setminus (B^4 \sqcup B^4)$, with degree $d = a-b$.

\begin{figure}[h!]
    \centering
    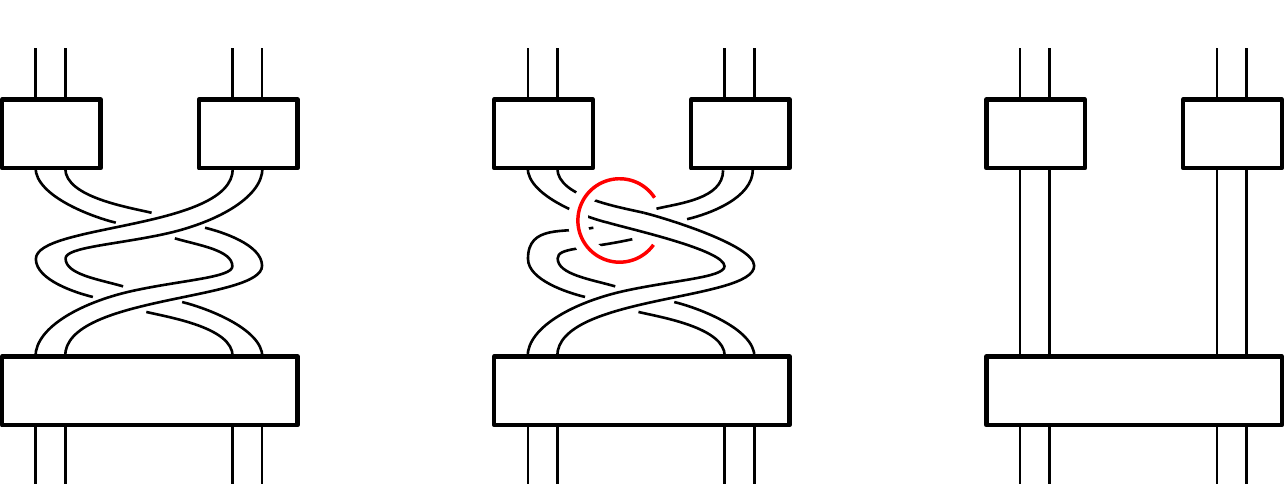
    \caption{The figure on the left shows the torus knot $T_{a+b,
        a+b-1}$.  A box with an integer denotes that many full twists
      (right-handed for positive, left-handed for negative integers).
      The box containing $-\tfrac{1}{a+b}$ indicates a negative
      fractional twist, i.e.\ the rightmost strand passes across to the
      extreme left, above all the other strands.  The second figure
      shows how to change $a \cdot b$ crossings with a blow-up, at the
      expense of adding a full twist on each of the two bundles of
      parallel strands.  The third figure shows a simplified diagram
      of the knot in the second figure, after one forgets the
      $(+1)$-framed $2$-handle. From the last figure it is clear that the knot in question is $T_{a, 2a-1}
      \# T_{b, 2b-1}$.}
    \label{fig:trick1}
\end{figure}

We can cap off this concordance with a minimum genus surface for $K_0$ in $B^4$, thus obtaining the desired surface $\Sigma$, with genus
\[
g(\Sigma) = g_4(T_{a, 2a-1}) + g_4(T_{b, 2b-1}) = (a-1)^2 + (b-1)^2.
\]

For the case $n \not\equiv d \mod 2$, set $a = \frac{n+d-1}2$ and $b = \frac{n-d-1}2$. Instead of the concordance in \cref{fig:trick1}, we now use the one illustrated in \cref{fig:trick2}. 
\begin{figure}
    \centering
    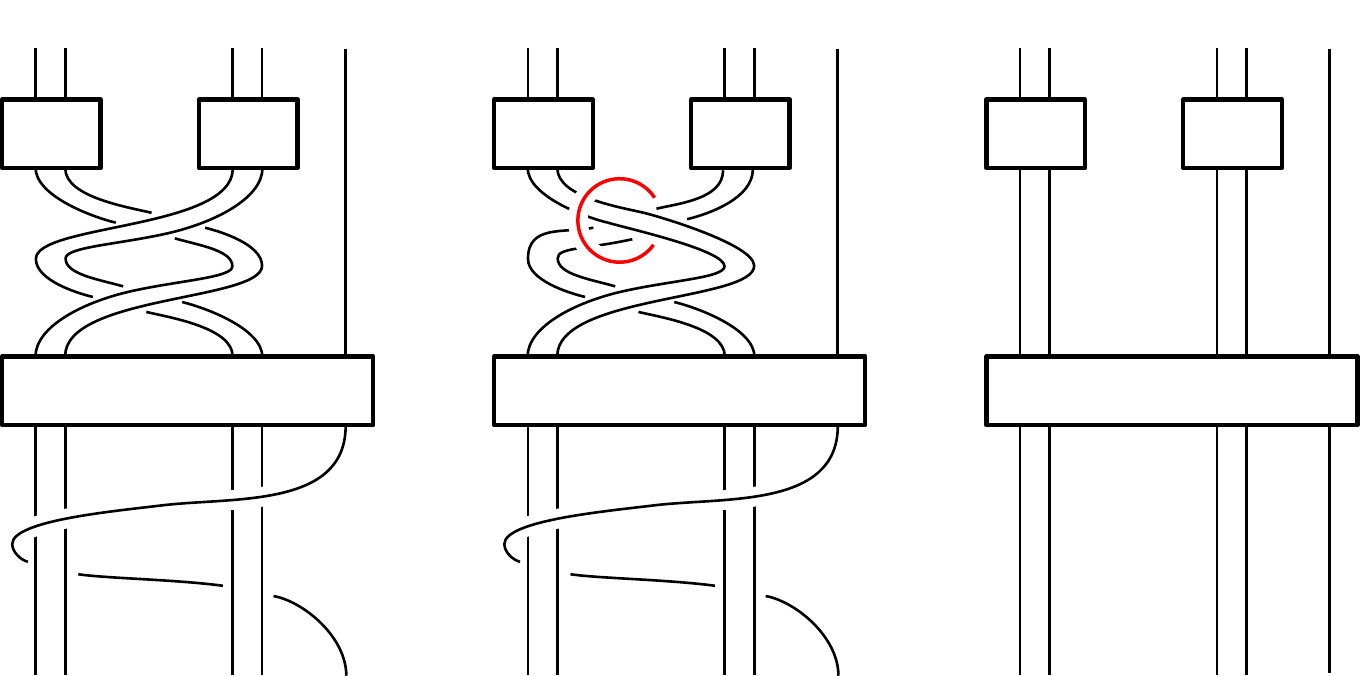
    \caption{The figure on the left shows the torus knot $T_{a+b+1, a+b}$.
    The second figure shows how to change $a \cdot b$ crossing with a blow-up, at the expense of adding a full twist on each of the two bundles of parallel strands.
    After simplifying the diagram (third figure), the knot is identified with $T_{a, 2a+1} \# T_{b, 2b+1} \# T_{1,0} = T_{a, 2a+1} \# T_{b, 2b+1}$. (Note the sign change in the fractional twist in the third figure.)}
    \label{fig:trick2}
\end{figure}
As before, we cap off with a minimum genus surface for $K_0$ in $B^4$, thus obtaining the desired surface $\Sigma$, which now has genus
\[
g(\Sigma) = g_4(T_{a, 2a+1}) + g_4(T_{b, 2b+1}) = a\cdot (a-1) + b\cdot (b-1).\qedhere
\]
\end{proof}

\begin{proof}[Proof of \cref{corollary}]
The corollary follows by applying Theorem \ref{thm:trick} with $d = 0$.

More explicitly, if $n \equiv 0 \mod2$, Theorem \ref{thm:trick} provides a smooth surface $\Sigma$ in $(\CP^2)^\times$ of genus
\[g(\Sigma) = \frac{(n-2)(n-2)}2,\]
which is an upper bound to $g_{\CP^2}(T_{n,n-1})$.
Thus, the difference $g_4(T_{n,n-1}) - g_{\CP^2}(T_{n,n-1})$ is at least
\[
g_4(T_{n,n-1}) - g(\Sigma) = \frac{(n-1)(n-2)}2 - \frac{(n-2)(n-2)}2 = \frac{n-2}2 \cdot((n-1)-(n-2)) = \frac{n-2}2.
\]

The case $n \equiv 1 \mod2$ is very similar: in this case Theorem \ref{thm:trick} provides a smooth surface $\Sigma$ in $(\CP^2)^\times$ of genus
\[g(\Sigma) = \frac{(n-1)(n-3)}2,\]
and therefore the difference $g_4(T_{n,n-1}) - g_{\CP^2}(T_{n,n-1})$ is at least
\[
g_4(T_{n,n-1}) - g(\Sigma) = \frac{(n-1)(n-2)}2 - \frac{(n-1)(n-3)}2 = \frac{n-1}2 \cdot((n-2)-(n-3)) = \frac{n-1}2.
\qedhere
\]
\end{proof}

\section{The genus function in \texorpdfstring{$\CP^2 \# \CP^2$}{CP2\#CP2}}
Using~\cref{thm:trick}, it is quite straighforward to complete the proof of~\cref{thm:2cp2}.
The strategy is the same as for the proof of \cite{aitnouh-2cp2}*{Theorem~1.2}.
    
\begin{proof}[Proof of Theorem~\ref{thm:2cp2}]
We first note that the knot $-T_{n,n-1}$, the mirror of the 
$(n,n-1)$ torus knot, is slice in $\CP ^2$, i.e., $g_{\CP
  ^2}(-T_{n,n-1})=0$, and that in addition the slice disk can be chosen to
represent $n$-times the generator in relative homology.
To see this, start from a description of the unknot as the closure of the braid
$
\sigma_1 \sigma_2 \cdots \sigma_{n-1},
$
as in \cref{fig:cable}, which (being unknotted) bounds a disk in $S^3$, and hence in $B^4$.
If we attach a $(+1)$-framed 2-handle to $B^4$ along the red unknot $U$, which encircles all the strands of the braid once, then we obtain a Kirby diagrammatic description of $(\CP^2)^\times$, together with a knot $K$ sitting in the new boundary $S^3 = \partial\left( (\CP^2)^\times\right)$; note that by construction $K$ bounds a disk in $(\CP^2)^\times$, in homology class given by $\mathrm{lk}(K,U) = n$.
Upon doing a Rolfsen twist, $K$ is revealed to be the knot $-T_{n,n-1}$.

\begin{figure}[htb]
    \centering
\begin{tikzpicture}
        \node[anchor=south west,inner sep=0] at (0,0){	\includegraphics{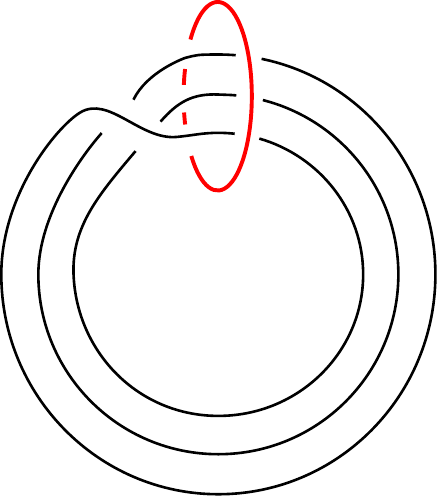}};
		\node at (2.7,5) {\color{red}$+1$};
		\node at (1.8,5) {\color{red}$U$};
		\node at (4.7,2) {$K$};
	\end{tikzpicture}
\caption{Attaching a $(+1)$-framed 2-handle to $B^4$ along an unknot gives the standard handle decomposition for $(\CP^2)^\times$.  In the resulting surgery description of $S^3$,   the knot $K$ depicted above appears unknotted,  though blowing down the $(+1)$-surgery curve to get the empty surgery diagram for $S^3$ reveals that $K=-T_{n,n-1}$.  The image of the standard slice disk for the unknot in $B^4$ in $(\CP^2)^\times$ is the desired slice disk for $K$,  and represents $n$-times the generator of relative second homology. The case $n=3$ is shown.}
\label{fig:cable}
\end{figure}    

Taking the boundary connected sum of $((\CP ^2)^{\times},T_{n,n-1})$
and $((\CP ^2)^{\times},-T_{n,n-1})$ together with the surface
described in the proof of Theorem~\ref{thm:trick} in the first and the
previously described slice disk in the second summand (and taking the
connected sum at points of the knots) we get a surface in $(\CP ^2\#
\CP ^2)^{\times}$ with genus given in Theorem~\ref{thm:trick}. In
addition, the boundary of this surface is the connected sum
$T_{n,n-1}\#-T_{n,n-1}$, which is a slice knot in $B^4$ (as is any
knot of the form $K\# -K$). This shows that the above surface with
boundary can be capped off by a disk to provide a closed surface in
$\CP^2 \# \CP^2$ with the same genus, representing the homology class
$(n,d)$. The proof of the theorem now follows after some simple arithmetic.
\end{proof}

\bibliographystyle{alpha}
\bibliography{bib}
\end{document}